\definecolor{darkblue}{rgb}{0, 0, .4}
\definecolor{grey}{rgb}{.7, .7, .7}
  \newcommand{\href}[2]{#2}
  \newcommand{\url}[2]{#2}
\newtheorem{theorem}{Theorem}[section]
\newtheorem{lemma}[theorem]{Lemma}
\theoremstyle{definition}
\newtheorem{example}[theorem]{Example}
\theoremstyle{remark}
\numberwithin{equation}{section}
\theoremstyle{theorem}
\newcommand{\abacus}[1]{{ \tiny \xymatrix @-2.3pc { #1 } }}
\newcommand{\ci}[1]{{\xy*{ #1 }*\cir<10pt>{}\endxy}}  
\newcommand{\nc}[1]{{\xy*{ #1 }*i\cir<10pt>{}\endxy}}  
\begin{document}

\title{The Refined Lecture Hall Theorem via Abacus Diagrams}
\date{}

\begin{abstract}
Bousquet-M\'elou \& Eriksson's lecture hall theorem generalizes Euler's
celebrated distinct-odd partition theorem. We present an elementary and
transparent proof of a refined version of the lecture hall theorem using a
simple bijection involving abacus diagrams.
\end{abstract}

\author{Laura Bradford}

\author{Meredith Harris}

\author{Brant Jones}

\author{Alex Komarinski}

\author{Carly Matson}

\author{Edwin O'Shea}
\address{Department of Mathematics and Statistics, MSC 1911, James Madison University, Harrisonburg, VA 22807}
\email{\href{mailto:[jones3bc,osheaem]@jmu.edu}{\texttt{[jones3bc,osheaem]@jmu.edu}}}
\thanks{The authors received support from NSF grant DMS-1004516.}

\maketitle

\bigskip
\section{Introduction} \label{s:intro}

Lecture hall partitions were introduced by  Bousquet-M\'elou and Eriksson \cite{BME} as sequences of non-negative 
integers 
$\lambda = (\lambda_1, \lambda_2, \ldots, \lambda_n)$ satisfying
\begin{equation}\label{e:lh}
0 \leq \frac{\lambda_1}{1} \leq \frac{\lambda_2}{2} \leq \cdots \leq \frac{\lambda_i}{i} \leq \cdots \leq \frac{\lambda_n}{n}.
\end{equation}
Pictorially, the diagram of $\lambda$ represents the heights of seats in a
lecture hall with $n$ rows.  The requirement that each row be able to see the
speaker (who is located at height zero) then corresponds to the slope condition
given in the definition. In \cite{BME}, the following remarkable theorem was shown. 

\begin{theorem} \label{t:main}
\textup{\bf (The Lecture Hall Theorem)}
\ \ We have 
\begin{equation}\label{e:gt}
\displaystyle 
\sum_{\lambda}x^{|\lambda|} 
\, = \, 
\prod_{i=1}^{n}\frac{1}{1-x^{2i-1}} \, 
\end{equation}
where the sum is taken over all lecture hall partitions $\lambda$ with $n$ parts and $|\lambda| = \sum_{i=1}^n \lambda_i$. 
\end{theorem}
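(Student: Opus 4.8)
The plan is to prove \eqref{e:gt} bijectively. First I would reinterpret the right-hand side: since $\prod_{i=1}^n (1-x^{2i-1})^{-1} = \prod_{i=1}^n \sum_{c_i \ge 0} x^{(2i-1)c_i}$, its coefficients count partitions whose parts are all odd and at most $2n-1$, recorded by a multiplicity vector $(c_1,\ldots,c_n)$ with each $c_i \ge 0$ and total weight $\sum_{i=1}^n (2i-1)c_i$. Thus it suffices to construct a weight-preserving bijection between lecture hall partitions $\lambda$ with $n$ parts, as in \eqref{e:lh}, and such multiplicity vectors.

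The tempting first move is to read the $c_i$ directly off the slopes $\lambda_i/i$, and for $n\le 2$ this already works: every $\lambda$ with $2\lambda_1\le\lambda_2$ is uniquely $\lambda_1\,(1,2) + (\lambda_2-2\lambda_1)\,(0,1)$, matching the weights $3$ and $1$. For $n\ge 3$ it breaks down, because the lattice points of the lecture hall cone do \emph{not} form a free monoid. Already for $n=3$ one has the relation $(0,0,1)+(0,2,3) = 2\,(0,1,2)$ among the natural weight-$1$, weight-$5$, and weight-$3$ points; in the language of odd parts this is exactly the carry $1+5 = 3+3$. Any honest bijection must resolve such carries uniformly, and this is precisely the bookkeeping the abacus diagram is meant to automate.

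Concretely, I would place a lecture hall partition on an abacus whose runners are indexed by the levels $1,3,\ldots,2n-1$ and whose beads encode the slope data $\lambda_i/i$ together with the integrality remainders those slopes carry. I would then define local, size-preserving bead moves realizing the carries above, sliding beads between adjacent runners so as to mirror the identity $2(2i-1) = (2i-3)+(2i+1)$, and argue that iterating them drives every diagram to a unique canonical configuration in which no further move applies. Reading the canonical configuration off runner by runner yields the multiplicity vector $(c_1,\ldots,c_n)$, and reversing the moves reconstructs $\lambda$.

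The main obstacle is the heart of the matter: showing the canonical form is well defined, i.e. that the bead moves terminate and are confluent (so the output does not depend on the order in which carries are resolved) and that the resulting map is a genuine bijection onto \emph{all} of $\{(c_1,\ldots,c_n) : c_i\ge 0\}$ rather than a proper subset. Equivalently, one must check that resolving carries neither loses nor duplicates lattice points, which is exactly where the non-freeness of the monoid gets tamed. I expect the cleanest way to organize this is to prove a refined identity that tracks the last slope $\lambda_n/n$ and induct on $n$, peeling off the largest part $\lambda_n$: once the constraint $\lambda_n \ge \tfrac{n}{n-1}\lambda_{n-1}$ is accounted for, that part contributes the factor $(1-x)^{-1}$, and the extra statistic carried by the abacus is what lets the induction close. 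Specializing the refinement then recovers \eqref{e:gt}.
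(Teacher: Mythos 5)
Your reduction of the right-hand side to multiplicity vectors of odd parts, and your diagnosis of the obstruction (the lattice points of the lecture hall cone do not form a free monoid; the carry $1+5=3+3$ already for $n=3$), are both correct and well put. But the proposal stops exactly where a proof would have to begin. Everything of substance is deferred: you never define the bead moves on your odd-runner abacus, never specify which configurations are ``canonical,'' and the two properties you yourself flag as the heart of the matter --- termination and confluence of the carry-resolution moves, plus bijectivity onto all of $\{(c_1,\ldots,c_n): c_i \ge 0\}$ --- are announced rather than argued. A rewriting system realizing $2(2i-1)=(2i-3)+(2i+1)$ is genuinely delicate for precisely the reason you identified: a carry at level $i$ can create new carries at levels $i\pm 1$, and nothing in the proposal controls this cascade or shows the outcome is independent of the order of resolution. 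Your fallback plan --- induct on $n$ by peeling off $\lambda_n$ and tracking an extra statistic --- is essentially Bousquet-M\'elou and Eriksson's original recursive proof, whose difficulty lies exactly in finding the refinement that makes the induction close; you do not identify that statistic.

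For contrast, the paper avoids canonical forms and confluence entirely by making the bijection explicit in one pass. Its abacus has $2n$ columns paired by a duality (balance) condition, with one defining bead per part of $\lambda$, placed by counting ``$i$-active'' positions; the diagram is then decoded directly into a bounded partition $\boldsymbol{p}$ (parts at most $2n$, with the parts $\le n$ distinct), whose generating function is immediately
$\prod_{i=1}^{n}(1+x^{i})\big/\prod_{j=1}^{n}(1-x^{n+j}) = \prod_{i=1}^{n}(1-x^{2i-1})^{-1}$;
finally the equality $|\lambda| = |\boldsymbol{p}|$ is proved by induction on the number of beads beyond the first window, each added bead contributing exactly $n+i$ to both sides. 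So the proof's architecture is: explicit encoding, explicit decoding, weight check --- no rewriting system anywhere. Your plan is not obviously unworkable, but as written it names the difficulty instead of resolving it, so it is not yet a proof.
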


This can be viewed as a finite generalization of Euler's classical result that
the number of partitions of a given integer having distinct parts is equal to
the number of partitions of that integer having odd parts.  To see this, observe
that the lecture hall inequalities (\ref{e:lh}) imply that $\lambda$ always has
distinct parts.  Conversely, if we are given any partition $\lambda$ with distinct
parts, then there exists an $N$ for which the partitions of length $n > N$ obtained
from $\lambda$ by including parts of size zero all satisfy the lecture hall
inequalities.  In this sense, the left side of (\ref{e:gt}) becomes the
generating function for partitions with distinct parts as $n \rightarrow
\infty$, while the right side of (\ref{e:gt}) becomes the generating function
for partitions with odd parts.  Hence, we recover Euler's result.  A gentle
introduction to other generalizations of Euler's result can be found in
\cite[Chapter 9]{andrews-eriksson-book}.

Bousquet-M\'{e}lou and Eriksson gave two proofs of Theorem~\ref{t:main} in
\cite{BME}:   one relied on Bott's formula for the affine Weyl group
$\widetilde{C}$ and the other was a relatively complicated recursive argument.
Shortly thereafter they further refined Theorem~\ref{t:main} and gave the first
truly bijective proof \cite[\S 3]{BME3} of the Lecture Hall Theorem.  Our
bijection also provides a proof of this refined version of the Lecture Hall
Theorem; see Theorem~\ref{t:refined}.  Other bijective proofs followed by Yee
\cite{yee-2001, yee-2002} which also proved the refined version, and by Eriksen
\cite{eriksen} whose construction gave further support to some open conjectures
on generalized lecture hall partitions \cite{BME2}. Savage and Yee
\cite{savage-yee} also gave a new proof by studying the more general
$\ell$-sequences.  These bijective proofs are elementary yet also somewhat
involved. 

Our proof adds to this body of work by providing a bijection that is both
elementary and straightforward.  The proof boils down to three pictures (see
Examples~\ref{ex:1}, \ref{ex:2}, and \ref{ex:3}) involving abacus diagrams, each
of which are simple and intuitive.  It remains to be seen if our streamlined
proof lends itself to the generalized versions of Theorem~\ref{t:main}. 

Many proofs of Theorem~\ref{t:main} are short yet rely heavily on background
knowledge of some external theory. For example, the other proof of \cite{BME3}
relied on a $q$-analog of Bott's formula for $\widetilde{C}$ and MacMahon's
partition analysis was utilized by Andrews in \cite{andrews}. Other proofs by
Savage et al. use $q$-series \cite{corteel-savage,acs}. There has also been
extensive work done by Savage and others \cite{savage-five} on understanding
the geometry of lecture hall partitions as lattice points in the cone given by
the inequalities that define those partitions. The recursive proof in
\cite{BME} can be interpreted in these geometric terms but an honest proof of
the lecture hall theorem in this lattice point sense currently remains out of
reach.  

In this article, we will develop abacus diagrams from scratch as a natural way
to encode lecture hall partitions.  Abacus diagrams were originally introduced
by James \cite{james-kerber} to study the modular representation theory of the
finite symmetric group.  These ``type $A$'' diagrams correspond to core
partitions and have been used by Wildon \cite{wildon} and Garvin--Kim--Stanton
\cite{garvin-kim-stanton} to study the partition function.  The abacus diagrams
in our work have appeared previously \cite{hanusa-jones} as minimal length coset
representatives in the affine Weyl group $\widetilde{C}$, and correspond to
symmetric core partitions.  We do not rely on these connections in our work.

Sections~\ref{s:abaci_lecturehalls} through \ref{s:bijpres} constitute our proof of Theorem~\ref{t:main}.  In
Section~\ref{s:abaci_lecturehalls} we explain how to encode lecture hall
partitions as abacus diagrams.  In Section~\ref{s:abbound}, we show that the
abacus diagrams are also in bijection with certain partitions whose parts are
bounded.  (This fact was shown previously in \cite{hanusa-jones}, but we include
a proof here to be self-contained.)  It is straightforward to verify that the
generating function for these bounded partitions is the same one that appears in
the Lecture Hall Theorem.  We show in Section~\ref{s:bijpres} that the
composite bijection from lecture hall partitions to bounded partitions preserves
the sum-of-parts statistic.  This shows that the lecture hall partitions have
the same generating function as the bounded partitions, and completes the proof
of the Lecture Hall Theorem.  In Section~\ref{s:refined} we prove the refined
version of the Lecture Hall Theorem that is given in Theorem~\ref{t:refined}.
Finally, in Section~\ref{s:conclusions}, we conclude with some remarks
indicating connections to the Coxeter group of type $\widetilde{C}$.

\bigskip
\section{Abacus diagrams for lecture hall partitions} \label{s:abaci_lecturehalls}

Fix a positive integer $n$.  In our work, we use a particular type of diagram to
encode the lecture hall partitions of length $n$, which we now describe.  We
begin with an array having $2n$ columns and countably many rows.  We label the
entry in the $i$th row and $j$th column of the array by the integer $j + 2ni$,
where $1 \leq j \leq 2n$.  In figures, we will draw the rows increasingly down
the page, and columns increasingly from left to right.  Then these labels
linearly order the entries of the array, which we refer to as {\bf reading
order}.  We also say that column $j$ is {\bf dual} to column $2n+1-j$, and we
call the entries $\{1+(k-1)n, 2+(k-1)n, \ldots, (n-1)+(k-1)n, nk\}$ the $k$th {\bf
window} of the array.  To create our diagram, we highlight certain entries in
the array; such entries are called {\bf beads} and will be circled in figures.
Entries that are not beads will be called {\bf gaps}.

To encode a lecture hall partition $\lambda$, we begin with the largest part
$\lambda_n$, and set entry $\lambda_n$ in the array to be a bead $b_n$.  Next,
skipping entries that lie in the column containing $b_n$ or its dual column, we
count out $\lambda_{n-1}$ positive positions and place a bead $b_{n-1}$.
Continuing in this way, we place one bead $b_i$ for each part $\lambda_i$ by
counting out $\lambda_i$ positive entries, not including the entries of any
column containing a previously placed bead $b_j$ for $j>i$, nor the duals of
such columns.  If $\lambda_i = 0$, then a bead $b_i$ is placed at the largest
nonpositive entry in a column that does not contain a previously placed bead,
nor the dual of a column containing a previously placed bead.  We will refer to
these beads $b_i$ as {\bf defining beads}.

In order to complete the diagram, we perform two additional steps for each
defining bead $b_i$.  First, we create beads at all of the entries above $b_i$
lying in the same column as $b_i$.  Second, if $b_i$ has label $j$, then the
entry labeled $1-2n-j$ occurs in the dual column to $b_i$.  We create beads at
this entry, and all entries lying above it in the dual column to $b_i$.  All of
the other entries in the diagram are gaps.  We call this completed diagram the
{\bf abacus diagram for $\lambda$}.

\bigskip
\begin{example}\label{ex:1}
\begin{figure}[ht]
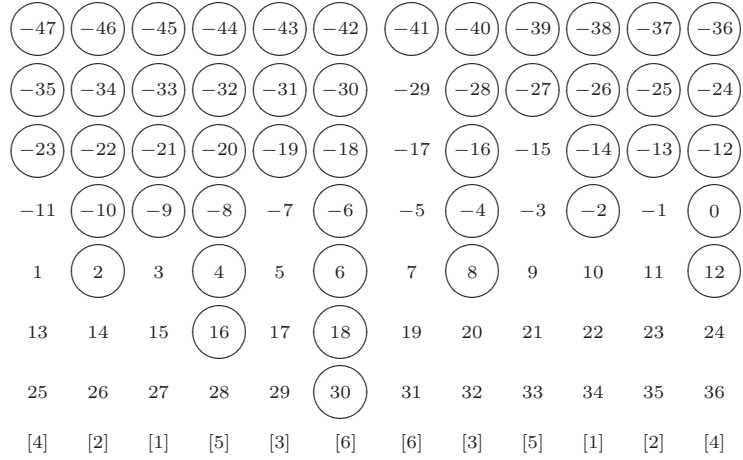

\[
\abacus 
{   
\ci{-47} &  \ci{-46} & \ci{-45} & \ci{-44} & \ci{-43} & \ci{-42} \,\,\, & \ci{-41} & \ci{-40} & \ci{-39} & \ci{-38} & \ci{-37} & \ci{-36} \\
\ci{-35} &  \ci{-34} & \ci{-33} & \ci{-32} & \ci{-31} & \ci{-30} \,\,\, & \nc{-29} & \ci{-28} & \ci{-27} & \ci{-26} & \ci{-25} & \ci{-24} \\
\ci{-23} &  \ci{-22} & \ci{-21} & \ci{-20} & \ci{-19} & \ci{-18} \,\,\, & \nc{-17} & \ci{-16} & \nc{-15} & \ci{-14} & \ci{-13} & \ci{-12} \\
\nc{-11} &  \ci{-10} & \ci{-9} & \ci{-8} & \nc{-7} & \ci{-6} \,\,\, & \nc{-5} & \ci{-4} & \nc{-3} & \ci{-2} & \nc{-1} & \ci{0} \\
\nc{1} &  \ci{2} & \nc{3} & \ci{4} & \nc{5} & \ci{6} \,\,\, & \nc{7} & \ci{8} & \nc{9} & \nc{10} & \nc{11} & \ci{12} \\
\nc{13} & \nc{14} & \nc{15} & \ci{16} & \nc{17} & \ci{18} \,\,\, & \nc{19} & \nc{20} & \nc{21} & \nc{22} & \nc{23} & \nc{24} \\
\nc{25} &  \nc{26} & \nc{27} & \nc{28} & \nc{29} & \ci{30} \,\,\, & \nc{31} & \nc{32} & \nc{33} & \nc{34} & \nc{35} & \nc{36} \\
\ & & & & & & & & & & & \ \ \\
[4] & [2] & [1] & [5] & [3] & [6] & [6] & [3] & [5] & [1] & [2] & [4] \\
}
\]
\caption{Abacus diagram for $\lambda = (0, 1, 4, 8, 14, 30)$.}\label{f:abex}
\end{figure}

Let $n = 6$.  Then $\lambda = (0, 1, 4, 8, 14, 30)$ is a lecture hall partition since 
\[ 0 \leq \frac{0}{1} \leq \frac{1}{2} \leq \frac{4}{3} \leq \frac{8}{4} \leq \frac{14}{5} \leq \frac{30}{6}. \]
Part of the abacus diagram for $\lambda$ is shown in Figure~\ref{f:abex}; the
unseen negative entries are all beads and the unseen positive entries are all
gaps.  The defining beads are $b_6 = 30$, $b_5 = 16$, $b_4 = 12$, $b_3 = 8$,
$b_2 = 2$, and $b_1 = -2$.  These beads lie in windows $5$, $3$, $2$, $2$, $1$,
and $0$, respectively.
\end{example}
 
For each $1 \leq i \leq n$, we say that entries in the column containing $b_i$
and in the dual column have {\bf class} $i$.  A position $p$ in the abacus
diagram for $\lambda$ is {\bf $i$-active} if $p$ lies weakly between position
$1$ and the position of the defining bead $b_i$ in reading order, and if the
class of $p$ is less than or equal to the class of $b_i$.  Then we can
summarize our construction as:

\begin{equation}\label{d:lhab}
\parbox{5in}{The abacus diagram for $\lambda$ is constructed by placing defining beads so
that there are $\lambda_i$ positions that are $i$-active, for each $1 \leq i \leq n$. }
\end{equation}

\begin{example}
In Figure~\ref{f:abex}, the classes of each column are indicated in brackets.  
The $4$-active positions are $12, 11, 10, 8, 5, 3, 2, 1$; there are $\lambda_4 = 8$ of these.
\end{example}

To describe the inverse construction, we will also consider arbitrary collections of
beads in the array.  We will say that such a collection of beads forms an
{\bf abacus diagram} if
\begin{itemize}
    \item No bead in any column is preceded in reading order by a gap in that
        column; when this condition holds, we say that the beads in the diagram are {\bf
        flush}.
    \item A bead occurs in position $j$ if and only if a gap occurs in position
        $1-j$ for all $j \in \mathbb{Z}$; when this condition holds, we say that
        the beads in the diagram are {\bf balanced}.
\end{itemize}
From the set consisting of the lowest bead in each column, the last $n$ of these
beads in reading order will be called the {\bf defining beads} of the abacus
diagram.  Observe that no two defining beads lie in dual columns, and that any
such set of defining beads determines a unique balanced flush abacus.

It is straightforward to verify that the abacus diagrams produced from lecture
hall partitions are abacus diagrams as defined in the preceding paragraph.
Moreover, we can recover a lecture hall partition from an arbitrary abacus
diagram by counting the number of $i$-active positions prior to each defining
bead in the diagram.

We claim that this is a bijection.

\begin{theorem} \label{the:LecAbaCor}
The lecture hall partitions are in bijection with abacus diagrams via the
constructions given above.
\end{theorem}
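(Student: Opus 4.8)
The plan is to show that the two constructions---placing defining beads to realize prescribed active counts, and reading active counts off the defining beads---are mutually inverse, after first pinning down the combinatorial skeleton shared by \emph{every} balanced flush abacus. I would begin by analyzing the lowest beads of an arbitrary balanced flush abacus. Using the balanced condition (a bead sits at $j$ exactly when a gap sits at $1-j$) together with flushness, one checks that if the lowest bead of a column sits at position $p$, then the lowest bead of the dual column sits at exactly $1-2n-p$; hence the $2n$ lowest beads split into $n$ dual pairs whose positions sum to $1-2n$. Since $1-2n$ is odd, the two members of each pair are distinct integers symmetric about $\tfrac12-n$, so exactly one lies above and one below it. Therefore the $n$ defining beads (the last $n$ in reading order) are precisely the lowest beads lying above $\tfrac12-n$, one drawn from each dual pair. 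This simultaneously shows that the defining beads occupy distinct, pairwise non-dual columns---so the class assignment is well defined---and it lets me invoke the stated fact that a legal set of defining beads extends to a unique balanced flush abacus. The bijection thus reduces to a correspondence between lecture hall partitions and configurations of defining beads.

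Next I would verify that the forward construction is well defined and that reading back the active counts recovers $\lambda$. The greedy placement clearly terminates; the points to check are that each $b_i$ is the lowest bead of its column (no later dual-fill lands beneath it, since the construction avoids dual columns) and that every $b_i$ lies above $\tfrac12-n$. For $\lambda_i\ge 1$ this is automatic as $p_i\ge 1$; for $\lambda_i=0$ the bead is placed at the largest nonpositive available position, and because the set of available columns is closed under duality its maximum is at least $n+1$, forcing $p_i\ge 1-n>\tfrac12-n$. Hence the placed beads are exactly the defining beads of the resulting abacus, and by design the number of $i$-active positions is again $\lambda_i$.

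The main obstacle is showing that the inverse direction lands among lecture hall partitions, i.e. that the recovered sequence satisfies $\tfrac{\lambda_1}{1}\le\cdots\le\tfrac{\lambda_n}{n}$. Here I would first establish the exact count $\lambda_i = 2 i r_i + g_i$, where $r_i$ is the array-row containing $b_i$ and $g_i$ is the number of columns of class at most $i$ lying weakly left of the column of $b_i$; this follows by counting class-$\le i$ positions row by row, each full row contributing exactly $2i$ of them. The structural input I would exploit is that, because dual columns carry complementary classes, columns $1,\dots,n$ carry the $n$ classes bijectively; consequently $g_i\le i$ whenever the column of $b_i$ lies in $[1,n]$, while always $g_i\le 2i$. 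With this in hand the slope inequality $(i+1)\lambda_i\le i\lambda_{i+1}$ reduces to a short case analysis on consecutive defining beads: when $b_i$ and $b_{i+1}$ share a row it becomes $(i+1)g_i\le i\,g_{i+1}$, which the bound $g_i\le i$ (or $g_i\le 2i$ when the column of $b_{i+1}$ lies in $[n+1,2n]$) settles; when $b_{i+1}$ lies in a strictly lower row, the gain of at least $2$ in $2r_{i+1}$ dominates the term $g_i/i\le 2$. Carrying out and correctly casing this inequality is where the real work lies.

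Finally I would close the loop. For inverse-then-forward, the active counts determine the greedy placement uniquely, so it suffices to check that the greedy rule reproduces the original defining beads; the only delicate point is the beads with $\lambda_i=0$, where the (automatic) weak monotonicity of $\lambda$ forces $b_1,\dots,b_i$ into the nonpositive columns $[n+1,2n]$ with columns increasing in reading order, matching the ``largest nonpositive available'' rule. Uniqueness of the balanced flush completion then returns the original abacus, and together with the forward-then-inverse check of the previous paragraph this establishes that the two constructions are mutually inverse, proving the claimed bijection.
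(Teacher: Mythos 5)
Your structural analysis of balanced flush abaci (the lowest beads pair up dually with positions summing to $1-2n$) is correct, your row-based formula $\lambda_i = 2ir_i+g_i$ is sound, and the case analysis you sketch for the slope inequality in the inverse direction can be pushed through (with the additional, true, increments $g_{i+1}\ge g_i+1$, resp.\ $g_{i+1}\ge g_i+2$, in your two cases). The genuine gap is in how you close the loop. Your ``forward-then-inverse'' check rests entirely on the sentence ``by design the number of $i$-active positions is again $\lambda_i$,'' but that statement is only true with respect to the labeling in which the bead placed for part $\lambda_i$ is \emph{declared} to have class $i$. The inverse map has no access to that labeling: applied to a bare abacus it must label the defining beads in reading order, $b_1<b_2<\cdots<b_n$, and assign classes accordingly (indeed this is the labeling your own formula and case analysis use). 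So the composite recovers $\lambda$ only if the greedy construction places its beads in reading order, i.e.\ the bead for $\lambda_i$ precedes the bead for $\lambda_{i+1}$. You never prove this, and it is not a formality --- it is exactly where the lecture hall inequalities must enter the forward direction. Concretely, with $n=2$ and the non-lecture-hall sequence $\lambda_1=3$, $\lambda_2=2$, the greedy rule puts the bead for $\lambda_2$ at position $2$ and the bead for $\lambda_1$ at position $5$ (out of reading order), and reading the resulting abacus back in reading order yields $(1,5)\neq(3,2)$. Thus ``by design'' cannot substitute for an argument; some hypothesis on $\lambda$ must be invoked to rule this out.

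The missing step is precisely the paper's key computation. Since every positive window contains exactly one position of each class, the number of class-$(i+1)$ positions in $[1,b_{i+1}]$ equals the index $\lceil\lambda_{i+1}/(i+1)\rceil$ of the window containing $b_{i+1}$; hence the number of class-$\le i$ positions in $[1,b_{i+1}]$ is $\lambda_{i+1}-\lceil\lambda_{i+1}/(i+1)\rceil$, and the integer form of the lecture hall inequality, $\lambda_i\le\lambda_{i+1}-\lceil\lambda_{i+1}/(i+1)\rceil$, says exactly that counting out $\lambda_i$ class-$\le i$ positions terminates strictly before $b_{i+1}$ (strictly, since $b_{i+1}$ has class $i+1$). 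This one window identity proves the reading-order property in the forward direction and, read backwards, also gives your inverse-direction inequality, replacing your heavier row-based case analysis. To complete your proof you should insert this lemma (or an analogue in your $r_i,g_i$ bookkeeping) showing $b_1<b_2<\cdots<b_n$ whenever $\lambda$ is a lecture hall partition; with it, your two composition checks do combine into a correct proof of the bijection.
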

\begin{proof}
Composing the constructions, in either order, recovers the original object.
Hence, it suffices to prove that the inequalities defining the lecture hall partitions
are equivalent to the conditions defining the abacus diagrams.

For all $i$, we have
\[ \frac{\lambda_i}{i} \leq \frac{\lambda_{i+1}}{i+1} \text{ \ \ \ \ if and only if \ \ \ \ } \lambda_{i} \leq \lambda_{i+1} - \frac{\lambda_{i+1}}{i+1},  \]
which is equivalent to
\begin{equation}\label{e:lhineq}
\lambda_{i} \leq \lambda_{i+1} - \lceil{\frac{\lambda_{i+1}}{i+1}}\rceil,
\end{equation}
since the parts of $\lambda$ must be integers.

Under the correspondence (\ref{d:lhab}), each positive window prior to the
window containing the $(i+1)$st defining bead will have exactly $i+1$ positions
that are ($i+1$)-active.  Therefore, $\lceil{\frac{\lambda_{i+1}}{i+1}}\rceil$
represents the window containing the $(i+1)$st defining bead.  Hence, the
inequality in (\ref{e:lhineq}) means that the maximum number of $i$-active
positions is the number of ($i+1$)-active positions minus one position from each
positive window up to and including the window containing the $(i+1)$st defining
bead.  This difference is equivalent to the construction we have given, in which
the entries of class $i+1$ and all higher classes are ignored when placing $b_i$
so that there are $\lambda_i$ positive $i$-active positions.  In particular, the
defining beads $b_1, b_2, \ldots, b_n$ occur in the abacus diagram in reading
order.
\end{proof}

\bigskip
\section{Bounded partitions from abacus diagrams} \label{s:abbound}

We say that a partition $\boldsymbol{p}$ is {\bf bounded} if all of its parts
are at most $2n$ and those parts less than or equal to $n$ are distinct.  In
contrast to the lecture hall partitions, these partitions are straightforward to
enumerate:  if we let $|\boldsymbol{p}|$ denote the sum of the parts of
$\boldsymbol{p}$ then we obtain the generating function 
\[ \sum_{\text{ bounded partitions $\boldsymbol{p}$}}
x^{|\boldsymbol{p}|} = \frac{(1+x)(1+x^2)\cdots(1+x^n)}{(1-x^{n+1})(1-x^{n+2}) \cdots
(1-x^{2n})} = \prod_{i=1}^{n}\frac{1}{1-x^{2i-1}}. \]

We claim that each abacus diagram corresponds to a unique bounded partition.  
Consider an abacus diagram whose positive beads occur in positions $\{\dot{b}_1,
\dot{b}_2, \ldots, \dot{b}_k\}$; note that these are determined by the defining
beads, but we include all positive beads in this list.  We form the
partition $\boldsymbol{p}$ whose distinct parts consist of the positions of
those beads lying in the first window of the array, and for every positive bead
$\dot{b}_i$ lying outside the first window we include a part $\boldsymbol{p}_i$ of size
\[ \#( \, \textup{gaps between} \, \dot{b}_i-2n \, \textup{and} \, \dot{b}_i \, ) + 1. \]

\begin{example}
In Figure~\ref{f:abex}, we have beads in positions $2$, $4$, and $6$ lying in the
first window of the abacus diagram, so these are the distinct parts less than or
equal to $n = 6$.  The bead in position $8$ has $6$ gaps lying between itself
and the bead in position $-4$.  Similarly, there are $7$ gaps lying between beads
$12$ and $0$; $8$ gaps lying between beads $16$ and $4$; $8$ gaps lying between
beads $18$ and $6$; and $11$ gaps lying between beads $30$ and $18$.  
Therefore, the corresponding bounded partition is
\[ \boldsymbol{p} = (2, 4, 6, 7, 8, 9, 9, 12). \]
\end{example}

\begin{theorem} \label{the:BouAbaCor}
The abacus diagrams are in bijection with the bounded partitions via the
construction above.
\end{theorem}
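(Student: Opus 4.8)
The plan is to show the stated map is well-defined (its output really is a bounded partition) and then to exhibit an explicit inverse. A useful first reduction: the balance and flush conditions let us recover an entire abacus from its finite set $P$ of positive beads. Indeed, flush makes the beads of each column a down-set, so each column $j$ has a largest bead $c_j$; balance forces a nonpositive position $q$ to be a bead exactly when $1-q\notin P$, and is equivalent to the relation $c_j+c_{2n+1-j}=1-2n$ on dual columns. Thus it suffices to biject the sets $P$ that arise with the bounded partitions.

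For well-definedness I would rewrite the part attached to a positive bead $p$ outside the first window as $2n-N(p)$, where $N(p)$ is the number of beads strictly between $p$ and the bead $p-2n$ directly above it. The crux is the bound $N(p)\le n-1$. To prove it, distribute the $2n-1$ positions of the open interval $(p-2n,p)$ among the $n$ dual pairs of columns. For a dual pair $(\ell,2n+1-\ell)$ not containing the column of $p$, the two representatives in the interval each exceed $p-2n$, so their sum exceeds $2(p-2n)$; were both beads, their sum would be at most $c_\ell+c_{2n+1-\ell}=1-2n$, and since $p>n$ this is impossible, so the pair contributes at most one bead. The pair containing the column of $p$ contributes none, because $p\in P$ makes that column's top bead positive and its dual's top bead too negative to reach the interval. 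Hence $N(p)\le n-1$, so every such part lies in $\{n+1,\dots,2n\}$; consequently the parts $\le n$ are exactly the first-window beads and are distinct, and the image is a bounded partition (matching the generating function computed above).

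For the inverse I would scan positions $1,2,3,\dots$ in turn: place a bead at each small part, and at a position $p>n$ place a bead precisely when $2n-N(p)$ is a large part not yet used, deleting it from the multiset. This is well-defined because, after using balance to replace the nonpositive part of $(p-2n,p)$ by the positions $1,\dots,2n-p$, the value $N(p)$ depends only on the pattern strictly below $p$, which the scan has already fixed. The engine that makes this greedy rule correct is the \emph{monotonicity lemma}: if $p$ is a gap and $p'>p$ is a bead, then $N(p)>N(p')$. I would prove it by telescoping $N(p')-N(p)=\sum_{x=p}^{p'-1}\big([x\text{ is a bead}]-[x-2n+1\text{ is a bead}]\big)$ and noting that flush, which down-shifts any bead by $2n$ to another bead, injects the beads of $[p+1,p']$ into the beads of $[p-2n+1,p'-2n]$; since $p$ is a gap and $p'$ a bead, this forces the latter count to strictly exceed the former.

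Consequently $2n-N(p)<2n-N(p')$ for every higher bead $p'$, so at a gap the hypothetical value is strictly smaller than the part of every bead still to come and hence is never in the remaining multiset — no spurious bead is placed — while at a genuine bead the value is exactly its part and is present. An induction on $p$ then shows the scan returns the original $P$, giving injectivity, and that it sends an arbitrary bounded partition to a valid flush, balanced abacus inducing it, giving surjectivity. I expect the main obstacle to be exactly this monotonicity lemma together with the bookkeeping that the greedy reconstruction never stalls and always produces a legal abacus; the dual-pairing bound, though the key structural insight, is comparatively self-contained.
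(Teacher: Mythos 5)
Your reduction to the set of positive beads, the dual-pairing bound (which is the paper's own ``each class contributes at most one gapless pair'' argument in different clothing), and the monotonicity lemma are all correct; in particular, the lemma applied to a genuine abacus does give injectivity exactly as you describe, since running the scan on the partition of a known abacus provably reproduces that abacus. The genuine gap is in surjectivity. Your monotonicity lemma is proved \emph{using flushness of the ambient configuration}: the downshift-injection step requires that every bead $x$ already has a bead at $x-2n$. But when you run the greedy scan on an \emph{arbitrary} bounded partition, there is no ambient abacus yet; whether the configuration the scan produces is flush is precisely what must be shown, so invoking the lemma to justify the scan's decisions (``the value at a gap is smaller than the part of every bead still to come'') is circular --- the ``beads still to come'' are not yet defined, and the inequality about them presupposes the conclusion. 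Concretely, you must rule out the scan placing a bead at an unsupported position $p$ (one whose $p-2n$ is a gap), and you must show no part is stranded. That requires a separate invariant argument, for instance: the running value $2n-N(p)$ is non-decreasing between placements, drops by at most one immediately after a placement, and can climb past a value $v$ only after every copy of $v$ in the multiset has been consumed; from these facts one deduces that beads land only on supported positions (hence flushness) and that every part is eventually used (no stall). You explicitly name this bookkeeping as the main obstacle but do not supply it, and it is not routine: it is a genuinely different argument from the monotonicity lemma, which by itself cannot do the job.

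By contrast, the paper sidesteps the issue entirely: its inverse construction never interrogates gaps, but places the bead for each successive part $\boldsymbol{p}_i$ directly at the $(\boldsymbol{p}_i-\boldsymbol{p}_{i-1}+1)$st supported position after the previous bead (or the $(\boldsymbol{p}_i-n)$th after position $n$ for the first large part), so flushness and the correct gap count hold by construction, and the observation that there are always $2n-\boldsymbol{p}_{i-1}+1$ supported positions available shows the placement never fails. If you wish to keep your greedy scan, supply the value-tracking invariants above; otherwise, replacing the scan by the paper's explicit supported-position placement closes the gap with less work, while your well-definedness argument and monotonicity lemma can stand as they are.
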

\begin{proof}
We first show that the construction is well-defined. Clearly if $\dot{b}_i$ 
is in the first window then its corresponding part is between $1$ and $n$. 
Otherwise, $\dot{b}_i$ is a bead in a position greater than $n$, and we have 
that $\dot{b}_i-2n$ is also a bead because abacus diagrams are
flush.  There is one other position between $\dot{b}_i-2n$ and $\dot{b}_i$ that
is in the same class as $\dot{b}_i$, and this position must be a gap since
abacus diagrams are balanced.  The other $2n-2$ positions all belong to the
other $n-1$ classes.  Each class has precisely two positions, at least one of
which is a gap since abacus diagrams are balanced.  Hence, the number of gaps
between $\dot{b}_i-2n$ and $\dot{b}_i$ must be at least $n$.  On the other hand,
since there are $2n-1$ positions lying strictly between $\dot{b}_i-2n$ and
$\dot{b}_i$, there can be at most $2n-1$ gaps between them.  Hence, each part
$\boldsymbol{p}_i$ that we append satisfies $n+1 \leq \boldsymbol{p}_i \leq 2n$,
as required.  Thus, $\boldsymbol{p}$ is a composition of parts between $1$ and
$2n$ with those parts between $1$ and $n$ being distinct.  The flush condition
also implies that the number of gaps between $\dot{b}_i-2n$ and $\dot{b}_i$ is
increasing as a function of $\dot{b}_i$'s position.  Hence, if we append the
parts $\boldsymbol{p}_i$ following the reading order of the beads $\dot{b}_i$,
then $\boldsymbol{p}$ will be sorted increasingly so $\boldsymbol{p}$ is a
bounded partition.

Next, we give the inverse construction.  To encode a bounded partition
$\boldsymbol{p} = (\boldsymbol{p}_1, \cdots, \boldsymbol{p}_s,
\boldsymbol{p}_{s+1}, \cdots, \boldsymbol{p}_t)$, where $1 \leq \boldsymbol{p}_1
< \boldsymbol{p}_2 < \cdots < \boldsymbol{p}_s \leq n$ are the distinct parts,
begin by placing beads in positions $\boldsymbol{p}_1, \boldsymbol{p}_2, \ldots,
\boldsymbol{p}_s$ and leave all other positions in window $1$ as gaps.  Next,
place beads and gaps in window $0$ by leaving the positions
$1-\boldsymbol{p}_1, 1-\boldsymbol{p}_2, \ldots, 1-\boldsymbol{p}_s$ as gaps and
assigning beads to all other positions in window $0$.

To facilitate the rest of the construction, we say that a position $j$ in an
abacus diagram is {\bf supported} if the position $j-2n$ is a bead, and we say
that $j$ is {\bf unsupported} otherwise.  Having placed the first $i-1$ beads so
that each bead accurately encodes a part of the bounded partition, we claim that
there is a unique position for a new bead $\dot{b}_i$ such that there exist
exactly $\boldsymbol{p}_i - 1$ gaps between $\dot{b}_i$ and $\dot{b}_i - 2n$,
and so that the resulting abacus diagram remains flush.

To see this, imagine placing a new positive bead $\dot{b}_i$ in the position just
after the last bead $\dot{b}_{i-1}$ in reading order, or at position $n+1$ if $i =
s+1$.  Then the number of gaps between $\dot{b}_i$ and $\dot{b}_i - 2n$ is exactly
$\boldsymbol{p}_{i-1} - 1$, or simply $n$ if $i = s+1$.  Next, consider moving
$\dot{b}_i$
forward in reading order one entry at a time.  Each time we pass an unsupported
position $j$, we lose one gap from position $j-2n$ but we gain a gap at position
$j$, so the number of gaps between $\dot{b}_i$ and $\dot{b}_i - 2n$ is unchanged.  As we
pass a supported position $j$, we only gain the gap at position $j$ so the
number of gaps between $\dot{b}_i$ and $\dot{b}_i - 2n$ increases by $1$.

In order to both create the correct number of gaps and to have a flush abacus,
we must therefore place $\dot{b}_i$ at the $(\boldsymbol{p}_{i} -
\boldsymbol{p}_{i-1}+1)$st next supported position after $\dot{b}_{i-1}$, or at the
$(\boldsymbol{p}_{i} - n)$th next supported position after position $n$ in the
case that $i = s+1$.  Since the number of supported positions between $\dot{b}_{i-1}$
and $\dot{b}_{i-1}+2n$ remains equal to $2n - \boldsymbol{p}_{i-1} + 1$, this is
always possible.  This construction determines the bead/gap status of every
position greater than or equal to $-n$, and we complete the construction by
forming the unique balanced abacus that agrees with these entries.
\end{proof}

\begin{example}\label{ex:2}
Given the bounded partition $\boldsymbol{p} = (2,4,6,7,8,9,9,12)$, we form the partial abacus
diagram consisting of the distinct parts $\boldsymbol{p}_1 = 2$,
$\boldsymbol{p}_2 = 4$, and $\boldsymbol{p}_3 = 6$:
\[
\abacus 
{   
\ci{-11} &  \ci{-10} & \ci{-9} & \ci{-8} & \ci{-7} & \ci{-6} \,\,\, & \nc{-5} & \ci{-4} & \nc{-3} & \ci{-2} & \nc{-1} & \ci{0} \\
\nc{1} &  \ci{2} & \nc{3} & \ci{4} & \nc{5} & \ci{6} \,\,\, & \nc{7} & \nc{8} & \nc{9} & \nc{10} & \nc{11} & \nc{12} \\
\nc{13} & \nc{14} & \nc{15} & \nc{16} & \nc{17} & \nc{18} \,\,\, & \nc{19} & \nc{20} & \nc{21} & \nc{22} & \nc{23} & \nc{24} \\
}
\]
The supported positions are $8$, $10$, $12$, $14$, $16$ and $18$.  These
positions would correspond to bounded parts of size $7$, $8$, $9$, $10$, $11$
and $12$, respectively, as we can see by counting the number of gaps between
each entry and the corresponding entry in the previous row.  Since
$\boldsymbol{p}_4 = 7$, we must place the next bead in position $8$, obtaining:
\[
\abacus 
{   
\ci{-11} &  \ci{-10} & \ci{-9} & \ci{-8} & \nc{-7} & \ci{-6} \,\,\, & \nc{-5} & \ci{-4} & \nc{-3} & \ci{-2} & \nc{-1} & \ci{0} \\
\nc{1} &  \ci{2} & \nc{3} & \ci{4} & \nc{5} & \ci{6} \,\,\, & \nc{7} & \ci{8} & \nc{9} & \nc{10} & \nc{11} & \nc{12} \\
\nc{13} & \nc{14} & \nc{15} & \nc{16} & \nc{17} & \nc{18} \,\,\, & \nc{19} & \nc{20} & \nc{21} & \nc{22} & \nc{23} & \nc{24} \\
}
\]
Now the supported positions are $10$, $12$, $14$, $16$, $18$ and $20$.  These
correspond to bounded parts of size $7$, $8$, $9$, $10$, $11$ and $12$,
respectively.  Since $\boldsymbol{p}_5 = 8$, we must place the next bead in
position $12$.  Continuing in this fashion, and then setting the bead/gap status
of the negative entries to balance with the positive entries we have specified,
we obtain the abacus in Figure~\ref{f:abex}.
\end{example}

\bigskip
\section{The bijections preserve the sums of parts} \label{s:bijpres}

Fix an abacus diagram on $2n$ columns.  We know from the previous sections that
there is a unique lecture hall partition $\lambda$ with $n$ parts and a
corresponding bounded partition $\boldsymbol{p}$. To show that the lecture hall
partitions are enumerated by the same generating function as the bounded
partitions, it suffices to show that $|\lambda| = |\boldsymbol{p}|$. 
In the running example from Figure~\ref{f:abex}, we have
\[ |\lambda| = 0+1+4+8+14+30 = 57 = 2+4+6+7+8+9+9+12 = |\boldsymbol{p}|. \]

The proof presented here is a straightforward induction proof, inducting on the
number of positive beads lying beyond position $n$ in an abacus.  It does need
a preliminary technical lemma. As before, the terms $b_i$ are (positions of)
the defining beads for class $i$ in a given abacus and, by
Theorem~\ref{the:LecAbaCor}, if $i < k$ then $b_i < b_k$.  Let $c(a)$ denote
the class of position $a$, and $\{  a < b : c(a) = k \}$ be the set of positive
positions less than $b$ of class $k$. 

\begin{lemma} \label{lem:ClassBead}
Fix an abacus diagram with defining beads $b_1, b_2, \ldots, b_n$.
If $i < k$ with $b_k - b_i < 2n$ then $ \# \{  a < b_k : c(a) = i \} - \# \{  a < b_i : c(a) = k \} = 1$. 
\end{lemma}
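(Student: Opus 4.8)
The plan is to exploit the fact that the $2n$ columns split into $n$ dual pairs, one pair per class, so that \emph{every window of $n$ consecutive positions contains exactly one position of each class $1, \ldots, n$}. First I would record this structural fact: an odd window occupies columns $1, \ldots, n$ and an even window occupies columns $n+1, \ldots, 2n$, and since the duality $j \leftrightarrow 2n+1-j$ interchanges these two halves, each class has exactly one column in each half and hence exactly one position in every window. Writing $x_m(w)$ for the unique position of class $m$ in window $w$, the defining bead $b_m$ is the class-$m$ position of its own window, so $b_i = x_i(W_i)$ and $b_k = x_k(W_k)$, where $W_i$ and $W_k$ are the windows containing $b_i$ and $b_k$.

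Next I would turn each count into a window count plus a within-window correction. For a defining bead in a positive window $W$, the positive class-$m$ positions lying below it are exactly the class-$m$ positions of the $W-1$ positive windows strictly below $W$, together with the class-$m$ position of $W$ itself when it precedes the bead. Thus $\#\{a < b_k : c(a) = i\} = (W_k - 1) + \delta_i$ and $\#\{a < b_i : c(a) = k\} = (W_i - 1) + \delta_k$, where $\delta_i$ equals $1$ if $x_i(W_k) < b_k$ and $0$ otherwise, and $\delta_k$ equals $1$ if $x_k(W_i) < b_i$ and $0$ otherwise. (Here I use that the relevant beads lie in positive windows, which is the setting in which the lemma is applied; only positive windows contribute to the counts.) Subtracting, the quantity to be computed equals $(W_k - W_i) + \delta_i - \delta_k$, so everything reduces to pinning down $W_k - W_i$ and the two order-indicators.

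Then I would carry out a short case analysis. By Theorem~\ref{the:LecAbaCor} we have $b_i < b_k$, hence $W_i \leq W_k$, and since two windows span $2n$ positions the hypothesis $b_k - b_i < 2n$ forces $W_k - W_i \in \{0,1,2\}$. Two auxiliary facts drive the argument: raising the window index by two reuses the same columns one full row lower, so $x_m(w+2) = x_m(w) + 2n$; and within a single window the order of the class-$i$ and class-$k$ positions is governed purely by the window's parity (odd windows compare the two classes by their left columns, even windows by their right columns, which reverses the comparison), so this order flips between consecutive windows and repeats between windows two apart. Now: if $W_k = W_i$, then $b_i < b_k$ forces $\delta_i = 1$, $\delta_k = 0$ and the difference is $1$; if $W_k = W_i + 1$, the parity flip makes $\delta_i$ and $\delta_k$ coincide, so the difference is again $1 + \delta_i - \delta_k = 1$; and if $W_k = W_i + 2$, then $b_k = x_k(W_i) + 2n$, whence the hypothesis $b_k - b_i < 2n$ is \emph{equivalent} to $x_k(W_i) < x_i(W_i) = b_i$, i.e. to $\delta_k = 1$, while the relation $x_m(W_k) = x_m(W_i) + 2n$ makes $\delta_i$ the complement of $\delta_k$ and so forces $\delta_i = 0$, giving difference $2 + 0 - 1 = 1$.

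The step I expect to be the main obstacle is the parity bookkeeping in the within-window ordering: one must verify carefully that consecutive windows reverse the $i$-versus-$k$ comparison while windows two apart preserve it, and that in the two-windows-apart case the numerical hypothesis $b_k - b_i < 2n$ translates \emph{exactly} into the indicator $\delta_k = 1$ that is needed to make the count equal $1$. Once this is in place the other two cases are immediate, and the positivity of the beads involved (so that the window-counting formula of the second paragraph is valid) is guaranteed in the context where the lemma is used.
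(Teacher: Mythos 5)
Your argument is correct and rests on the same structural facts as the paper's proof --- one position of each class per window, the restriction to at most three windows forced by $b_k - b_i < 2n$, and the reversal of the class-$i$/class-$k$ order between consecutive windows --- but the bookkeeping is organized differently. The paper cancels the positive windows below $\omega_i$ (its name for your $W_i$) against each other, telescopes what remains as $\bigl(0 - \#\{a<b_i : c(a)=k\}\bigr) + 1 + \#\{a : b_i<a<b_k,\ c(a)=i\}$, and splits into two cases according to whether the class-$k$ slot of window $\omega_i$ precedes $b_i$, i.e.\ according to your indicator $\delta_k$. You instead write each count in closed form, $(W_k-1)+\delta_i$ and $(W_i-1)+\delta_k$, reduce the lemma to $(W_k-W_i)+\delta_i-\delta_k=1$, and split into three cases on $W_k-W_i\in\{0,1,2\}$. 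The two case structures cross-cut one another (the paper's Case (i) is $\delta_k=1$, which spans your cases $W_k-W_i\in\{1,2\}$; its Case (ii) is $\delta_k=0$, spanning $W_k-W_i\in\{0,1\}$), but the content is the same; your version has the merit of isolating exactly where the hypothesis $b_k-b_i<2n$ does real work, namely the case $W_k=W_i+2$, where it is equivalent to $\delta_k=1$. One caveat applies to both proofs equally: your window-count formulas require $W_i\geq 1$, and your parenthetical claim that positivity ``is guaranteed in the context where the lemma is used'' is not actually right --- in the induction of Theorem~\ref{the:OneNorm} the bead $b_i$ can lie in window $0$ (take $n=2$ and $\lambda=(0,1)$, so $b_1=-1$, $b_2=1$; adding a bead at position $3$ is a legitimate inductive step, yet both counts in the lemma are $0$, so the stated difference is $0$, not $1$). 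Since the paper's cancellation step tacitly assumes $\omega_i\geq 1$ as well, this is a defect of the lemma as stated rather than of your argument relative to the paper's.
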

\begin{proof}
Denote the window that contains $b_i$ as the $\omega_i$-th window. Since $b_k$ comes after $b_i$ in reading order 
then in every window previous to the $\omega_i$-th window there is exactly one position of class $i$ and 
one of class $k$. On the other hand, since $b_k - b_i < 2n$ then $b_k$ is in one of the 
$\omega_i$-th window, $(\omega_i + 1)$-th window or the $(\omega_i + 2)$-th window. 
Consequently, the difference $ \# \{  a < b_k : c(a) = i \} - \# \{  a < b_i : c(a) = k \}$ can be 
restricted to those positions in the $\omega_i$-th, $(\omega_i + 1)$-th and $(\omega_i + 2)$-th 
windows. In the expression below we only count positions in these three windows. 

The difference $ \# \{  a < b_k : c(a) = i \} - \# \{  a < b_i : c(a) = k \}$ can be expanded as 
\begin{align*}
&(\underbrace{\# \{  a < b_i : c(a) = i \}}_{=0} - \# \{  a < b_i : c(a) = k \}) \, \\ 
+ \, &\underbrace{\# \{  a = b_i : c(a) = i \}}_{= 1} \, + \, \#  \{  a : b_i < a < b_k , \, c(a) = i \}.
\end{align*}

{\bf Case (i): the position of class $k$ in $\omega_i$ occurs before $b_i$}.  In
this case $b_k$ must be in the class $k$ position of either the
$(\omega_i+1)$-th window or the $(\omega_i+2)$-th window, the latter window
having the same class positions as $\omega_i$, in the former reversed.  Either
way the class $i$ position in the $(\omega_i+1)$-th window is the only position
in the set  $\{  a : b_i < a < b_k , \, c(a) = i \}$. Also, $\{  a < b_i : c(a)
= k \}$ has only one element, the class $k$ position in window $\omega_i$.
Hence, we have $\# \{  a < b_k : c(a) = i \} - \# \{  a < b_i : c(a) = k \} =
(0-1)+1+1 = 1$. 

{\bf Case (ii): the position of class $k$ in $\omega_i$ occurs after $b_i$}.  In
a similar fashion to Case (i), we clearly have $\# \{  a < b_i : c(a) = k \} =
0$. Since $b_k$ is either in a position after $b_i$ in the $\omega_i$-th
window or in a position before the class $i$ position in the $(\omega_i+1)$-th
window, the set $\{  a : b_i < a < b_k , \, c(a) = i \}$ is empty and $ \# \{  a
< b_k : c(a) = i \} - \# \{  a < b_i : c(a) = k \} = (0 -0)+1+0 = 1$.
\end{proof}

\begin{theorem} \label{the:OneNorm}
For every abacus diagram, the corresponding lecture hall partition $\lambda$ and
the corresponding bounded partition $\boldsymbol{p}$ satisfy $|\lambda| = |\boldsymbol{p}|$. 
\end{theorem}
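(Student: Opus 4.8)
The plan is to induct on the number of positive beads lying beyond position $n$, as the hypothesis of Lemma~\ref{lem:ClassBead} anticipates; throughout write $P = b_n$ for the largest defining bead and recall from Theorem~\ref{the:LecAbaCor} that the defining beads occur in reading order, so $b_1 < \cdots < b_n$. The base case is the situation in which no positive bead lies beyond position $n$: then every positive bead sits in the first window, so $\boldsymbol{p}$ is exactly the set of distinct positions of those beads. Here the positive defining beads are precisely the $s$ beads lying in the first window, namely $b_{n-s+1}, \ldots, b_n$, at positions $q_1 < \cdots < q_s$. I would argue that, since the columns carrying the higher classes have their first-window entries at positions $> q_j$ and their dual columns lie beyond position $n$, no positive position $p \le q_j$ can carry a class larger than that of the bead at $q_j$. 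Consequently each positive part equals its $q_j$, giving $|\lambda| = \sum_j q_j = |\boldsymbol{p}|$.

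For the inductive step I assume some positive bead lies beyond position $n$, so $P = b_n > n$. The key move is to delete the bead at $P$ and restore balance by creating a bead at $1-P$; because the abacus is flush and $P$ is the largest positive bead, the result is again a balanced flush abacus, now with one fewer positive bead beyond position $n$. Under the bijection of Theorem~\ref{the:BouAbaCor} this deletes from $\boldsymbol{p}$ exactly one part, the largest one, of size $2n - \beta$, where $\beta$ denotes the number of beads strictly between $P-2n$ and $P$. By the inductive hypothesis the reduced abacus satisfies $|\lambda'| = |\boldsymbol{p}'|$, so it suffices to prove that the lecture hall sum drops by the same amount, namely $|\lambda| - |\lambda'| = 2n - \beta$. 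The first simplification is that $\lambda_n = P$: every position has class at most $n$, so all $P$ of the positions $1, \ldots, P$ are $n$-active.

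The core of the step is to track how the class labels are reassigned. The $2n$ positions $\{P - 2n + 1, \ldots, P\}$ form a full period, so each class occurs among them exactly twice; moreover, since $P$ is the largest bead, each of the $\beta$ beads strictly between $P-2n$ and $P$ is the top bead of its column, so these are precisely the defining beads $b_{n-\beta}, \ldots, b_{n-1}$. Deleting $b_n$ therefore promotes $P-2n$ into the defining set as the new $b_{n-\beta}'$ and cyclically raises the labels of $b_{n-\beta}, \ldots, b_{n-1}$ by one. Comparing the two partitions term by term, the parts with $i < n-\beta$ are unchanged, while for each bumped bead one gets $\lambda_{n-\beta+j}' = \lambda_{n-\beta+j-1} + \#\{1 \le p \le b_{n-\beta+j-1} : c(p) = n\}$. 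This is exactly the point at which Lemma~\ref{lem:ClassBead} enters: because each such $b_i$ lies within $2n$ of $b_n$, the lemma rewrites the class-$n$ count below $b_i$ as $\#\{a < b_n : c(a) = i\} - 1$. The $\beta$ unit corrections supplied by the lemma contribute $+\beta$ to the difference, the full-period identity $\#\{a \le P : c(a) = i\} - \#\{a \le P - 2n : c(a) = i\} = 2$ contributes $-2\beta$, and together with $\lambda_n = P$ and the value $P - 2n$ carried by the promoted bead these assemble to $2n + \beta - 2\beta = 2n - \beta$, as needed.

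The main obstacle is precisely this relabelling bookkeeping: one must confirm that deleting $b_n$ promotes $P-2n$ (and not some lower boundary bead) into the defining set and rotates the classes as described, and then relate every new part $\lambda_i'$ to the old parts. I expect the genuine friction to be confined to the boundary cases --- when $P - 2n \le 0$, so that $b_n$ lies in the second window, or when a bumped defining bead is itself nonpositive --- where several of the counts above must be taken over positive positions only; the same full-period counting together with Lemma~\ref{lem:ClassBead} should dispatch these with only minor extra care.
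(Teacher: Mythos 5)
Your proposal is correct and takes essentially the same route as the paper: the same induction on the number of positive beads beyond the first window, the same base case $\lambda = \boldsymbol{p}$, the same class-relabeling bookkeeping, and the same use of Lemma~\ref{lem:ClassBead} to supply the unit corrections. Your step of deleting the largest bead $P=b_n$ and promoting $P-2n$ into the defining set is exactly the inverse of the paper's step of appending a bead at $b_i+2n$ below a defining bead (your $\beta$ equals the paper's $n-i$), and your tally $2n+\beta-2\beta = 2n-\beta$ is the paper's computation $|\lambda^*|-|\lambda| = 2n-(n-i) = n+i$ read in the opposite direction.
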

\begin{proof}
We will prove the statement by induction on the number of positive beads $t$
lying beyond the first window in an abacus.  If $t = 0$ then the bounded partition
$\boldsymbol{p}$ corresponding to the abacus contains only distinct parts.
Since all the parts of the lecture hall partition $\lambda$ correspond to
positions in the abacus between $1$ and $n$, we have that $\lambda =
\boldsymbol{p}$.

Next, suppose we have an initial abacus with $t-1$ positive beads lying beyond
position $n$, and let us assume our inductive hypothesis that $|\lambda| =
|\boldsymbol{p}|$ for this initial abacus. Let us call this the $(t-1)$-abacus,
the prefix representing the number of positive beads lying beyond position $n$
in the abacus. Placing an additional positive bead in the $(t-1)$-abacus to
create a $t$-abacus diagram that is balanced and flush means that we can only
place a bead directly below an already existing defining bead. Assume this bead
is $b_i$, the defining bead of class $i$ in the $(t-1)$-abacus, and so the new
bead is in position $b_i + 2n$. Without loss of generality, we can assume that
$b_i+2n$ is the last bead in reading order in the $t$-abacus -- if it were not
we could remove the last bead in reading order to attain another abacus with
$t-1$ beads and assume the induction hypothesis on this abacus. All other
positions remain as beads or gaps as in the initial abacus but note that the
classes of the columns have changed. In particular, 
\begin{enumerate}
 \item[\textup{(a)}] the bead $b_i$ that was the defining bead of class $i$ in the $(t-1)$ abacus it is 
	  now of class $n$. It is no longer a defining bead, rather $b_i+2n$ is the defining 
	  bead of class $n$ in the $t$-abacus.  As a consequence, positions of class $i$ in the $(t-1)$-abacus 
	  are of class $n$ in the $t$-abacus; 
 \item[\textup{(b)}] the defining beads $b_{i+1}, b_{i+2}, \ldots, b_{n-1}, b_n$ in the $(t-1)$-abacus all lie between 
	  $b_i$ and $b_i+2n$ in reading order in the $t$-abacus. They remain defining beads in the $t$-abacus 
	  but their classes are now shifted down by $1$. That is, $b_k$ is of class $k-1$ in the $t$-abacus for all 
	  $i+1 \leq k \leq n$.  As a consequence, positions of class $k$ in the $(t-1)$-abacus 
	  are of class $k-1$ in the $t$-abacus; 
 \item[\textup{(c)}] the defining beads $b_1,\ldots,b_{i-1}$ of the $(t-1)$-abacus are all in positions less 
	  than $b_i$ and are defining beads for the same respective classes in the $t$-abacus.
\end{enumerate}
Let $\lambda^*$ and $\boldsymbol{p}^*$ denote the lecture hall and bounded partitions respectively of the $t$-abacus. 
Note that $\boldsymbol{p}^* = \boldsymbol{p}+p^*_t$ where the part $p_t^*$ is created by the new bead 
$b_i+2n$. Since the beads $b_{i+1}, \ldots, b_n$ are all strictly between $b_i$ and $b_i+2n$ and since all other 
defining beads are less than $b_i$ then the number of gaps between $b_i$ and $b_i+2n$ is $(2n-1)- (n-i) = n+i-1$. 
Hence the part $p_t^* = (n+i-1)+1 = n+i$ and $|\boldsymbol{p}^*| = |\boldsymbol{p}| + (n+i)$. 

All that remains to show is that 
$|\lambda_1^* + \lambda_2^* + \cdots + \lambda_n^*| 
= |\lambda_1 + \lambda_2 + \cdots + \lambda_n| + (n+i)$. We will do this by writing each $\lambda_k^*$ 
in terms of $\lambda_k$. 
Recall that for a given abacus, $\lambda_k$ is the sum of the $k$-active positions which, in the notation 
of Lemma~\ref{lem:ClassBead}, can be written as 
$\lambda_k = \sum_{j=1}^k{ \# \{  a \leq b_k : c(a) = j \} }$. 

By (c) above, $\lambda_k^* = \lambda_k$ for all $k$ less than $i$. By (a) above, the largest part from the 
$t$-abacus is 
\[ \lambda_n^* \, = \, \lambda_i + 2n +  \# \{  a < b_i : c(a) > i \}  \, = \, \lambda_i + 2n +  
\sum_{k=i+1}^n \# \{  a < b_i : c(a) = k \} \]
where $c(a)$ refers to the class of position $a$ in the $(t-1)$-abacus. By (b) above, for each 
$k = i, \ldots, n-1$ we have 
\[ \lambda_k^* \, = \, \lambda_{k+1} - \# \{  a < b_k : c(a) = i \} \]
where once again $c(a)$ refers to the class of position $a$ in the $(t-1)$-abacus.

This implies that 
\[ |\lambda^*| - |\lambda| \, = \, 2n - 
\sum_{k=i+1}^n 
\left(
\underbrace{\# \{  a < b_k : c(a) = i \} -   \# \{  a < b_i : c(a) = k \}}_{=1 \, \textup{ by Lemma~\ref{lem:ClassBead}}}
\right) \]
and so $|\lambda^*| - |\lambda| = 2n - (n-i) = n+i$ as claimed. 
\end{proof}

\begin{example}\label{ex:3}
Consider the case of $t = 4$ for our running example in which we add a bead to
position $18$:
\[
\abacus 
{   
\ & \ci{-35} &  \ci{-34} & \ci{-33} & \ci{-32} & \ci{-31} & \ci{-30} \,\,\, & \ci{-29} & \ci{-28} & \ci{-27} & \ci{-26} & \ci{-25} & \ci{-24} \\
\ & \ci{-23} &  \ci{-22} & \ci{-21} & \ci{-20} & \ci{-19} & \ci{-18} \,\,\, & \nc{-17} & \ci{-16} & \nc{-15} & \ci{-14} & \ci{-13} & \ci{-12} \\
\ & \nc{-11} &  \ci{-10} & \ci{-9} & \ci{-8} & \nc{-7} & \ci{-6} \,\,\, & \nc{-5} & \ci{-4} & \nc{-3} & \ci{-2} & \nc{-1} & \ci{0} \\
\ & \nc{1} &  \ci{2} & \nc{3} & \ci{4} & \nc{5} & \ci{6} \,\,\, & \nc{7} & \ci{8} & \nc{9} & \nc{10} & \nc{11} & \ci{12} \\
\ & \nc{13} & \nc{14} & \nc{15} & \ci{16} & \nc{17} & {\color{blue} \ci{18}} \,\,\, & \nc{19} & \nc{20} & \nc{21} & \nc{22} & \nc{23} & \nc{24} \\
\ & \ & & & & & & & & & & & \ \ \\
t-1: & [5] & [2] & [1] & [6] & [4] & [3] & [3] & [4] & [6] & [1] & [2] & [5] \\
\ & \ & & & & & & & & & & & \ \ \\
t: & [4] & [2] & [1] & [5] & [3] & [6] & [6] & [3] & [5] & [1] & [2] & [4] \\
}
\]
Then, the $(t-1)$-abacus corresponds to
\[ \lambda = (0,1,3,6,10,16) \ \ \text{ and } \ \ \boldsymbol{p} =
(2,4,6,7,8,9), \]
and the $t$-abacus corresponds to 
\[ \lambda^{*} = (0,1,6-2,10-2,16-2,3+12+3) = (0,1,4,8,14,18) \ \ \text{ and } \ \ \boldsymbol{p}^* = (2,4,6,7,8,9,9). \]
Note that 
\[ |\lambda^*| - |\lambda| = (12+3)+(-2)+(-2)+(-2) = 9 = |\boldsymbol{p}^*| - |\boldsymbol{p}|. \]
\end{example}

\bigskip
\section{The refined lecture hall theorem} \label{s:refined}

Given a lecture hall partition $\lambda= (\lambda_1, \lambda_2, \ldots, \lambda_n)$,  
let 
$
\lceil \lambda \rceil 
:=  
(\lceil \frac{\lambda_1}{1} \rceil, \lceil \frac{\lambda_2}{2} \rceil, \ldots, \lceil \frac{\lambda_n}{n} \rceil)
$ 
and let $o(\lceil \lambda \rceil)$ equal the number of odd parts of $\lceil \lambda \rceil$. 
In \cite{BME3} the following refinement of the Lecture Hall Theorem was shown.

\begin{theorem} \label{t:refined}
\textup{\bf (The Refined Lecture Hall Theorem)}
\ \ We have 
\begin{equation}\label{e:rgt}
\displaystyle 
\sum_{\lambda}x^{|\lambda|} u^{|\lceil \lambda \rceil|} v^{|o(\lceil \lambda \rceil)|} 
\, = \, 
\frac{(1+uvx)(1+uvx^2)\cdots(1+uvx^n)}{(1-u^2x^{n+1})(1-u^2x^{n+2}) \cdots (1-u^2x^{2n})}
\end{equation}
where the sum is taken over all lecture hall partitions $\lambda$ with $n$ parts and $|\lambda| = \sum_{i=1}^n \lambda_i$. 
 \end{theorem}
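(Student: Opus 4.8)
The plan is to push the two new statistics through the chain of bijections already established for Theorem~\ref{t:main}, reading off what $u$ and $v$ should count on the bounded-partition side and then matching these to $\lceil\lambda\rceil$ and $o(\lceil\lambda\rceil)$. Expanding the right-hand side of (\ref{e:rgt}), each factor $(1+uvx^j)$ with $1 \le j \le n$ records an optional part of size $j \le n$ weighted by $uv$, and each factor $(1-u^2x^{n+i})^{-1}$ records a repeatable part of size $n+i > n$ with every copy weighted by $u^2$. Thus, writing $s(\boldsymbol{p})$ for the number of parts of $\boldsymbol{p}$ that are at most $n$ and $\ell(\boldsymbol{p})$ for the number of parts exceeding $n$ (counted with multiplicity), the right-hand side of (\ref{e:rgt}) is exactly $\sum_{\boldsymbol{p}} x^{|\boldsymbol{p}|} u^{s(\boldsymbol{p}) + 2\ell(\boldsymbol{p})} v^{s(\boldsymbol{p})}$, summed over all bounded partitions. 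Since Theorems~\ref{the:LecAbaCor} and \ref{the:BouAbaCor} give a bijection and Theorem~\ref{the:OneNorm} gives $|\lambda| = |\boldsymbol{p}|$, it remains only to verify the two statistic identities $o(\lceil\lambda\rceil) = s(\boldsymbol{p})$ and $|\lceil\lambda\rceil| = s(\boldsymbol{p}) + 2\ell(\boldsymbol{p})$ under the composite bijection.

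The first step is to reinterpret $\lceil\lambda\rceil$ on the abacus. Let $\omega_k$ denote the index of the window containing the defining bead $b_k$; the computation in the proof of Theorem~\ref{the:LecAbaCor} identifies $\lceil \lambda_k/k\rceil$ with $\omega_k$ (using that a zero part places its bead in window $0$). Consequently $|\lceil\lambda\rceil| = \sum_{k=1}^n \omega_k$ and $o(\lceil\lambda\rceil)$ is the number of indices $k$ for which $\omega_k$ is odd, so both target identities become statements about the multiset of windows occupied by the defining beads. I would prove them by induction on the number $t$ of positive beads lying beyond the first window, in exact parallel with the proof of Theorem~\ref{the:OneNorm}. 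In the base case $t = 0$ we have $\lambda = \boldsymbol{p}$ with $\ell(\boldsymbol{p}) = 0$; here every $\omega_k$ equals $0$ or $1$, with $\omega_k = 1$ precisely when $\lambda_k > 0$, so both sides of both identities count the number $s(\boldsymbol{p})$ of positive parts.

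For the inductive step I would reuse the structural analysis (a)--(c) from the proof of Theorem~\ref{the:OneNorm}. Passing from the $(t-1)$-abacus to the $t$-abacus inserts the bead $b_i + 2n$: the beads of classes $1, \ldots, i-1$ and $i+1, \ldots, n$ remain defining beads (in unchanged positions, merely relabeled by class), the bead $b_i$ ceases to be a defining bead, and the new bead $b_i + 2n$ becomes the defining bead of class $n$, lying in window $\omega_i + 2$. At the level of the multiset of defining-bead windows this simply removes $\omega_i$ and inserts $\omega_i + 2$. Meanwhile $\boldsymbol{p}^*$ acquires one new part exceeding $n$ while keeping the same parts at most $n$, so $\ell$ increases by one and $s$ is unchanged.

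The crux --- the one observation beyond Theorem~\ref{the:OneNorm} --- is that $\omega_i$ is replaced by $\omega_i + 2$, a value of the same parity. Hence the number of odd windows is preserved, matching the invariance of $s(\boldsymbol{p})$, while $\sum_k \omega_k$ increases by exactly $2$, matching the increase of $s(\boldsymbol{p}) + 2\ell(\boldsymbol{p})$; this closes the induction. Combining the two identities with $|\lambda| = |\boldsymbol{p}|$ shows the bijection carries $x^{|\lambda|} u^{|\lceil\lambda\rceil|} v^{o(\lceil\lambda\rceil)}$ to $x^{|\boldsymbol{p}|} u^{s(\boldsymbol{p}) + 2\ell(\boldsymbol{p})} v^{s(\boldsymbol{p})}$, and summing over all abaci yields (\ref{e:rgt}). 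I expect the only genuinely delicate points to be the window bookkeeping in the base case --- confirming that no $\omega_k$ exceeds $1$ and that zero parts sit in window $0$ rather than window $-1$ --- and checking that the class relabeling in (b) indeed leaves every bead position, hence every window, fixed; both are routine given the machinery already in place.
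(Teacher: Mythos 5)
Your proposal is correct, but its engine is genuinely different from the paper's. The reduction is shared: both expand the right-hand side of (\ref{e:rgt}) so that each part of $\boldsymbol{p}$ that is at most $n$ carries a weight $uv$ and each larger part carries $u^2$, both identify $\lceil\lambda_i/i\rceil$ with the window index $\omega_i$ of the defining bead $b_i$, and both thereby reduce the theorem to the two identities $|\lceil\lambda\rceil| = s(\boldsymbol{p}) + 2\ell(\boldsymbol{p})$ and $o(\lceil\lambda\rceil) = s(\boldsymbol{p})$. From there the paper argues directly, with no induction: for a fixed class $i$, the balanced and flush conditions force the class-$i$ positions to alternate bead/gap as one moves down from window $\omega_i$, giving the closed formula $\omega_i = 2\cdot\#(\text{class-}i \text{ beads in windows } k\geq 2) + \#(\text{class-}i \text{ beads in window } 1)$, where the last term is nonzero exactly when $\omega_i$ is odd; summing over classes and noting that every positive bead has exactly one class yields both identities bead by bead. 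You instead run an induction on the number $t$ of positive beads beyond the first window, reusing the structural analysis (a)--(c) from Theorem~\ref{the:OneNorm}: inserting the bead $b_i+2n$ replaces $\omega_i$ by $\omega_i+2$ in the multiset of defining-bead windows while appending the single large part $n+i$ to $\boldsymbol{p}$, so $\sum_k \omega_k$ grows by $2$ and the number of odd windows is unchanged, exactly matching the growth of $s+2\ell$ and the invariance of $s$. Both arguments are sound; yours buys economy, since the refined theorem then comes nearly for free once the Theorem~\ref{the:OneNorm} induction is in place (the only new content being the parity-preserving step $\omega_i \mapsto \omega_i+2$), while the paper's buys a sharper local statement --- the individual contribution of $+2$ or $+1$ from each bead --- and avoids invoking the inductive scaffolding (in particular the ``last bead in reading order'' normalization) a second time.
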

\begin{proof}
We claim that our bijections via abacus diagrams prove this refined version as
well.  Note that the specialization $u=v=1$ yields the Lecture Hall Theorem, and
so all we need to prove is the following: 
\begin{enumerate}
 \item[(a)] Every part of ${\bf p}$ in $\{ n+1, n+2, \ldots, 2n \}$ contributes $+2$ to the weight of $\lceil \lambda \rceil$. 
 \item[(b)] Every part of ${\bf p}$ in $\{ 1, 2, \ldots, n \}$ contributes $+1$ to the weight of $\lceil \lambda \rceil$. 
 \item[(c)] The number of parts of ${\bf p}$ in $\{ 1, 2, \ldots, n \}$ equals the number of odd parts of $\lceil \lambda \rceil$. 
 \end{enumerate}
The proof of Theorem~\ref{the:BouAbaCor} told us that every bead in a window $k \geq 2$ corresponded to a part $p$ of ${\bf p}$
with $n+1 \leq p \leq 2n$ and that every bead in the first window corresponded to a ``small'' part, $1 \leq p \leq n$ in ${\bf p}$.  
Recall also from the proof of Theorem~\ref{the:LecAbaCor}, that we labeled the window that contains the defining bead $b_i$ 
as the $\omega_i$-th window and that $\omega_i$ equals $\lceil \frac{\lambda_i}{i} \rceil$. 
With this in mind, the conditions (a)-(c) respectively are equivalent to the following conditions on the abacus diagram:  
\begin{enumerate}
 \item[(a')] Every bead in a window $k \geq 2$ contributes $+2$ to $\sum_{i=1}^n \omega_i$. 
 \item[(b')] Every bead in the first window contributes $+1$ to $\sum_{i=1}^n \omega_i$. 
 \item[(c')] The number of beads in the first window equals the number of odd $\omega_i$'s. 
\end{enumerate}
Since each window consists of $n$ positions, one position for each class $1 \leq i \leq n$, 
then $\omega_i$ can alternatively be expressed as  
$$
\omega_i = \#(\textup{positive positions} \, \leq b_i \, \textup{of class} \, i). 
$$
Suppose that there is a bead of class $i$ in the $k$-th window. Then, by the balanced condition, the 
class $i$ position in the $(k-1)$-th window is a gap and, by the flush condition, the class $i$ position in the 
$(k-2)$-th window is a bead, and so on. Consequently, the number of positive class $i$ positions $\leq b_i$ 
can be written in terms of beads:  
\[
\omega_i = \begin{cases}
    \parbox{2.5in}{$2 \cdot$ \#(beads of class $i$ in a window $k \geq 2$) + \#(beads of class $i$ in the first window)} & \textup{if} \, \omega_i \, \textup{is odd} \\ 
    & \\
    \parbox{2.5in}{$2 \cdot$ \#(beads of class $i$ in a window $k \geq 2$)} & \textup{if} \, \omega_i \, \textup{is even} \\ 
\end{cases}
\]
Since every bead is of one and only class then (a') and (b') are satisfied. Finally, by the flush condition 
the beads in the first window are either defining beads themselves or they are supported below by a defining bead. 
Each of these defining beads must live in an odd window and so (c') is satisfied.  
\end{proof}

\bigskip
\section{Conclusions} \label{s:conclusions}

Although our exposition has been self-contained, the combinatorics we have
developed is relevant to the affine Weyl group $\widetilde{C}_n$ and compares
favorably with the earliest proof of the lecture hall theorem that relies on
Bott's formula \cite{BME}.  In this section, we briefly review these
connections.

Recall that a Coxeter group is a group $W$ with a certain presentation in terms
of generators $s_0, s_1, \ldots, s_n$, each of which is an involution, such that
the only relations in $W$ arise as a consequence of imposing dihedral subgroup
structures on the subgroups generated by each pair of generators.
For each group element $w$, we let $\ell(w)$ denote the minimal length of any
expression for $w$ in the generators $s_0, s_1, \ldots, s_n$.  A fundamental
enumeration problem for any Coxeter group $W$ is to describe the generating
function $\sum_{w \in W} t^{\ell(w)}$.  When $W$ is a finite or affine Weyl
group, this problem has applications to algebraic geometry and representation
theory.

Since any subset $J$ of the generators will generate a subgroup $W_J$ of $W$, we
may consider the cosets of $W_J$ in $W$.  The set of these cosets is often
denoted $W/W_J$.  It turns out that each coset contains a unique element of
minimal length, and we denote the set of these minimal length coset
representatives by $W^J$.  If we abuse notation to let $X(t)$ denote $\sum_{w
\in X} t^{\ell(w)}$ for any subset $X$ of $W$, then we obtain the factorization
\[ W(t) = W_J(t) W^J(t). \]
It follows from this that $W(t)$ is always a rational generating function that
can be computed inductively.

Bott's formula is an explicit description of $W^J$ when $W$ is an affine Weyl
group, and $J$ is the set of generators for the corresponding finite Weyl
subgroup.  It turns out that $W^J$ always takes the form $\prod_{i=1}^n
\frac{1}{1-t^{e_i}}$ where the $e_i$ are the ``exponents'' of the Weyl group;
see \cite{humphreys} for details.

In the case when $W = \widetilde{C}_n$ and $J = \{s_1, \ldots, s_n\}$, we happen
to obtain
\[ W^J(t) = \prod_{i=1}^n \frac{1}{1-t^{2i-1}}, \]
the same generating function as for restricted odd partitions or lecture hall
partitions.  This empirical fact is probably what led Bousquet-M\'elou and
Eriksson to their original proof of the lecture hall theorem.

In that proof, the authors explained this coincidence by realizing the Weyl
group $\widetilde{C}_n$ as a subgroup of permutations of the integers, using a
certain carefully developed embedding.  They provided a bijection between the
lecture hall partitions and these integer permutations.  Under this map, the sum
of the parts of the lecture hall partition corresponds with an inversion
statistic on the integer permutation $w$ that is known to be equivalent to
$\ell(w)$.

Our proof uses combinatorics that have been developed recently in
\cite{hanusa-jones} to generalize James' abacus model \cite{james-kerber} from
type $\widetilde{A}$ to the other affine types.  The abacus diagrams we have
described in the present paper are identical to those defined in
\cite{hanusa-jones}.  It is shown there that the abacus diagrams correspond to
elements $w \in W^J$, and that from the abacus diagram it is possible to read
off the bounded partitions that are known to have sum of parts equal to
$\ell(w)$ \cite[Proposition 7.4]{hanusa-jones}.  In fact, our work here together
with the results of \cite{hanusa-jones} could be viewed as an independent proof
of Bott's formula in type $\widetilde{C}$.

\bigskip
\section*{Acknowledgments}

This work was initiated during the summer of 2012 at a research experience for
undergraduates (REU) program at James Madison University mentored by the third
and sixth authors.  We thank JMU and Leonard Van Wyk for their support.  We
also thank Carla Savage and Matthias Beck for helpful conversations.  Finally,
we would like to acknowledge the anonymous referee for providing useful
references and comments on an earlier draft of this work.


\bibliographystyle{alpha}

\end{document}